\numberwithin{equation}{section}
\newtheorem{thm}{Theorem}[section]
\newtheorem{lem}[thm]{Lemma}
\newtheorem{prop}[thm]{Proposition}
\newtheorem{ex}[thm]{Example}
\theoremstyle{definition}
\newtheorem{defin}[thm]{Definition}
\newtheorem{rmk}[thm]{Remark}
\renewcommand{\H}{\mathcal{H}}
\newcommand\floor[1]{\lfloor#1\rfloor}
\newcommand\abs[1]{\left|#1\right|}
\newcommand{\D}{\Omega}
\newcommand{\RN}{\mathbb{R}^n}
\newcommand{\RtN}{\mathbb{R}^{2n}}
\newcommand{\vol}{\text{vol}}
\newcommand{\R}{\mathbb{R}}
\newcommand{\Cn}{\mathbb{C}^n}
\newcommand{\Sn}{\mathbb{S}^{2n-1}}
\newcommand{\dbarb}{\overline{\partial}_b}
\newcommand{\dbarbstar}{\overline{\partial}_b^{\ast}}
\newcommand{\boxb}{\square_b}
\newcommand{\hpq}{\mathcal{H}_{p,q}}
\newcommand\numberthis{\addtocounter{equation}{1}\tag{\theequation}}
\newcommand{\skipitems}[1]{%
	\addtocounter{\@enumctr}{#1}%
}
\title[Weyl Law for the Kohn Laplacian]{An Analog of the Weyl Law for the Kohn Laplacian on Spheres}
\author{Mohit Bansil}
\address[Mohit Bansil]{Michigan State University, 619 Red Cedar Road C212 Wells Hall, East Lansing, MI 48824, USA}
\email{bansilmo@msu.edu}
\author{Yunus E. Zeytuncu}
\address[Yunus E. Zeytuncu]{University of Michigan--Dearborn, Department of Mathematics and Statistics, 2048 Evergreen Road, Dearborn, MI 48128, USA}
\email{zeytuncu@umich.edu}
\thanks{This work is supported by NSF (DMS-1659203). The work of the second author is also partially supported by a grant from the Simons Foundation (\#353525).}
\date{}
\begin{document}
	
	\begin{abstract}
		We present an explicit formula for the leading coefficient in the asymptotic expansion of the eigenvalue counting function of the Kohn Laplacian on the unit sphere $\Sn$.		
	\end{abstract}
	
	\maketitle
	
	\section{Introduction}
	
	\subsection{Statement} If two planar domains have the same spectrum for the Laplace operator with Dirichlet boundary condition, then are they equivalent up to rigid transformations? Kac's famous paper \cite{Kac} presents this problem of inverse spectral theory to a wide audience. Although the answer is negative \cite{Gordon}, many similar questions wait answers (see \cite{Lu}).
	
	The analogs of these problems on CR manifolds seek to relate the complex geometry of the manifold and the spectrum of the Kohn Laplacian. For example, in \cite{Fu2005} Fu showed that a bounded domain in $\Cn$ is pseudoconvex if and only if the essential spectrum of the Kohn Laplacian on the boundary of the domain is positive.
	
	In this note we look at the eigenvalue counting function for the Kohn Laplacian and obtain the coefficient of the leading term in the asymptotic expansion. Similar estimates on general CR manifolds appear in \cite{Met, Stanton, Ponge}, where the coefficients are expressed in terms of certain integrals. However, here we look at the problem on a specific CR manifold, namely the unit sphere $\Sn\subset \Cn$. We express the leading coefficient explicitly and explore its relation to geometric quantities of $\Sn$.
	
	Let $N(\lambda)$ denote the number of eigenvalues of the Kohn Laplacian on the unit sphere that are less or equal than $\lambda$. In \cite{REU18} it was shown that $N(\lambda)$ grows on the order of $\lambda^{n}$. In this note, we show that 
	\begin{equation}\label{one}
	\lim_{\lambda \to \infty}\frac{N(\lambda)}{\lambda^{n}}=\frac{1}{2^nn!}\left( \sum_{k=1}^\infty k^{-n}\left[\binom{k+n-2}{n-2}+\binom{k-1}{n-2}\right]-\frac{1}{(n-1)^n}\right). 
	\end{equation}
	Although we present an elementary proof of this new result, the main purpose of the paper is to start an investigation for the geometric meaning of these numbers.
	
	\subsection{Weyl's Law}
	The relation between spectral theory and geometry has long been a centerpiece of mathematics. Weyl's law beautifully expresses this relation on bounded domains in $\RN$, see \cite[Section 1.1]{Ivrii}. In particular, if $\D$ is a bounded domain in $\RtN$ and $N(\lambda)$ is the number of non-zero Dirichlet eigenvalues of the Laplace equation on $\D$ that are less than or equal to $\lambda$, counting multiplicity, then Weyl's law states that
	\begin{equation*}
	\lim_{\lambda \to \infty}\frac{N(\lambda)}{\lambda^{n}}=(2\pi)^{2n}\omega_{2n} \vol(\D)    
	\end{equation*}
	where $\omega_d$ is the volume of the unit ball in $\mathbb{R}^d$. More specifically, if $\D$ is the unit ball in $\RtN$ then
	\begin{equation*}
	\lim_{\lambda \to \infty}\frac{N(\lambda)}{\lambda^{n}}=(2\pi)^{2n}\omega_{2n}^2. \end{equation*}
	We see later in the note that the right hand side of \eqref{one} is not as elegant as Weyl's law.
	
	\subsection{Kohn Laplacian}
	Before we go deeper in the calculations we introduce the Kohn Laplacian. The Kohn Laplacian (or $\dbarb$-Laplacian) on a compact orientable CR manifold 
	\begin{equation*}
	\boxb=\dbarb\dbarbstar+\dbarbstar\dbarb
	\end{equation*}
	is a linear, closed, densely defined self-adjoint operator from the space of $(0,q)-$forms $L^2_{(0,q)}(\Sn)$ to itself. Here, $\dbarb$ is a differential operator that send $(0,q)$-forms to $(0,q+1)$-forms, and $\dbarbstar$ denotes its Hilbert space adjoint. Both are densely defined linear operators on respective spaces of differential forms. These operators are fundamental differential operators in the study of CR manifolds. We refer the reader to \cite[Chapter 8]{CS01} for a detailed account.
	
	\subsection{Spectrum on the Sphere}
	
	In \cite{Folland}, Folland computes the eigenvalues and eigenforms of $\boxb$ on $(0,r)-$forms $L^2_{(0,r)}(\Sn)$ by using unitary representations. When $r=0$, in other words on the space of functions, the eigenvalues are of the form 
	\begin{equation}2q(p+n-1)\end{equation}
	with the corresponding eigenspaces $\hpq \left(\Sn\right)$\footnote{For more information on spherical harmonics we refer to \cite{Axler13Harmonic}. We simplify the notation by writing just $\hpq$.} are the space of spherical harmonics, where $p$ and $q$ denote the bi-degrees of the polynomials. It follows from the symmetry of the sphere that the spherical harmonics of different bi-degree are orthogonal. Furthermore, 
	the dimensions of these spaces are given by
	\begin{equation} \label{hpq dim} \dim\left(\hpq (\Sn)\right) = \binom{n + p - 1}{p} \binom{n + q - 1}{q}  - \binom{n + p - 2}{p - 1} \binom{n + q - 2}{q - 1}  \end{equation}
	as computed in \cite{Klima}. We note that when $r=0$, that is when the Kohn Laplacian acts on the space of functions $L^2(\Sn)$, it simplifies to 
	\begin{equation*}
	\boxb=\dbarbstar\dbarb.
	\end{equation*}
	In addition to Folland's original proof, a direct computational proof (for $r=0$) is given in \cite{REU18}.

	\section{Leading Coefficient: Proof of \eqref{one}}
	
	In this section we analyze the asymptotics of the eigenvalue counting function. In particular we show that it has polynomial growth and we compute the leading coefficient in its polynomial expansion. First we define ``big-O notation" in the setting that we will be using it.
	
	\begin{defin}
		
		Given functions $f,g,h$ in the variables on $\mathbb{R}$ we write $f = g + O(h)$ if there are constants $c, x_0\in \R$, so that $\abs{f(x)-g(x)} \leq ch(x)$ for all $x\geq x_0$.

	\end{defin}
	
	Next we have a simple binomial coefficient bound.
	
	\begin{lem}\label{binomial sum bound}
	Let $a,b$ nonnegative integers and $y$ a positive real number. Then
	\begin{align*}
	\sum_{q=1}^{\floor{y}} \binom{q+b}{a} = \frac{y^{a+1}}{(a+1)!} + O(y^{a}). 
	\end{align*}
\end{lem}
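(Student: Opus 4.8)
The plan is to reduce the sum to a single binomial coefficient by a telescoping (hockey-stick) identity, read off its leading behavior as a polynomial in $\floor{y}$, and then pass from $\floor{y}$ to $y$. The key point is that $\binom{q+b}{a}$ is, for fixed $a$ and $b$, a polynomial of degree $a$ in $q$ with leading coefficient $1/a!$, so the sum should behave like $\frac{1}{a!}\sum_{q\le y} q^a \approx \frac{1}{a!}\cdot\frac{y^{a+1}}{a+1}$; the telescoping identity is just a clean way to make this exact.

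First I would reindex by setting $m=q+b$, turning the sum into $\sum_{m=b+1}^{\floor{y}+b}\binom{m}{a}$. Applying the hockey-stick identity $\sum_{m=a}^{M}\binom{m}{a}=\binom{M+1}{a+1}$, together with the convention $\binom{m}{a}=0$ for $m<a$ (which makes the boundary cases $b<a$ automatic), yields the exact closed form
\[
\sum_{q=1}^{\floor{y}}\binom{q+b}{a}=\binom{\floor{y}+b+1}{a+1}-\binom{b+1}{a+1}.
\]
The subtracted term is a constant independent of $y$, hence absorbed into $O(y^a)$ as soon as $a\ge 0$.

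Next I would expand the main term. As a function of $\floor{y}$ the quantity $\binom{\floor{y}+b+1}{a+1}=\frac{(\floor{y}+b+1)(\floor{y}+b)\cdots(\floor{y}+b+1-a)}{(a+1)!}$ is a polynomial of degree $a+1$ with leading coefficient $\frac{1}{(a+1)!}$, so it equals $\frac{\floor{y}^{a+1}}{(a+1)!}+O(\floor{y}^a)$, and since $\floor{y}\le y$ the error is $O(y^a)$. Finally I would replace $\floor{y}$ by $y$: writing $\floor{y}=y-\{y\}$ with $\{y\}\in[0,1)$ and expanding $(y-\{y\})^{a+1}$ by the binomial theorem, every correction term has degree at most $a$ in $y$, giving $\frac{\floor{y}^{a+1}}{(a+1)!}=\frac{y^{a+1}}{(a+1)!}+O(y^a)$. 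Combining the three steps yields the claim.

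I do not expect a genuine obstacle here; the only thing requiring care is the bookkeeping of error terms, namely confirming that the constant $\binom{b+1}{a+1}$, the lower-order polynomial terms, and the fractional-part correction are each uniformly $O(y^a)$ for $y$ bounded away from $0$. This is immediate once $a\ge 0$, so the proof is essentially the telescoping identity followed by routine estimates.
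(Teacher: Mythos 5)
Your proof is correct, but it reaches the conclusion by a different route than the paper. The paper's proof never produces a closed form: it writes $\binom{q+b}{a} = \frac{q^a}{a!} + O(q^{a-1})$ (valid since the summand is a degree-$a$ polynomial in $q$ with leading coefficient $1/a!$), sums the power $q^a$ to get $\frac{\floor{y}^{a+1}}{(a+1)\,a!} + O(\floor{y}^a)$, and then converts $\floor{y}$ to $y$ exactly as you do in your last step. You instead telescope via the hockey-stick identity to get the exact expression $\binom{\floor{y}+b+1}{a+1}-\binom{b+1}{a+1}$ before doing any estimation, which confines all approximation to a single polynomial expansion at the end and handles the degenerate cases ($b<a$, or $\floor{y}+b<a$) automatically through the vanishing of the falling factorial. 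The paper's version is slightly shorter to write down but sweeps the summation of the error terms $O(q^{a-1})$ into a line of big-O bookkeeping; yours trades that for the (standard) identity and is, if anything, a bit more airtight. Both are valid, and your error analysis --- the constant $\binom{b+1}{a+1}$, the lower-order terms of the degree-$(a+1)$ polynomial, and the fractional-part correction all being $O(y^a)$ --- is carried out correctly and is consistent with the paper's definition of $O(\cdot)$ for $y \ge y_0$.
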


\begin{proof}
	Since $a,b$ are fixed $\binom{q+b}{a}$ is a polynomial of degree $a$ in the variable $q$. We have
	\begin{align*}
	\sum_{q=1}^{\floor{y}} \binom{q+b}{a}
	&= \sum_{q=1}^{\floor{y}} \frac{q^{a} }{a!} + O(q^{a-1}) 
	= \frac{(\floor{y})^{a+1}}{(a+1) a!} + O(\floor{y}^{a}) 
	= \frac{(y + O(1))^{a+1}}{(a+1)!} + O({y}^{a})\\ 
	&= \frac{y^{a+1} + O({y}^{a})}{(a+1)!} + O({y}^{a}) 
	= \frac{y^{a+1}}{(a+1)!} + O({y}^{a}),
	\end{align*}
	as desired.		
\end{proof}
	
	We now proceed to the proof of the asymptotics of the eigenvalue counting function. 
	
	\begin{thm} Let $N(\lambda)$ be the counting function as defined above, then
		\begin{equation*}
		\lim_{\lambda \to \infty}\frac{N(\lambda)}{\lambda^{n}}=\frac{1}{2^nn!}\left( \sum_{k=1}^\infty k^{-n}\left[\binom{k+n-2}{n-2}+\binom{k-1}{n-2}\right]-\frac{1}{(n-1)^n}\right). 
		\end{equation*}
	\end{thm}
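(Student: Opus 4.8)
The plan is to evaluate $N(\lambda)$ directly from Folland's spectral data. Since the eigenvalue attached to $\mathcal{H}_{p,q}$ is $2q(p+n-1)$, which is positive exactly when $q\geq 1$, I would start from
\[
N(\lambda)=\sum_{q\geq 1}\ \sum_{\substack{p\geq 0\\ 2q(p+n-1)\leq \lambda}}\dim\mathcal{H}_{p,q},
\]
with the (infinite-dimensional) kernel $q=0$ excluded. The inner constraint reads $p\leq \frac{\lambda}{2q}-(n-1)=:M_q$, and only $q\lesssim\frac{\lambda}{2(n-1)}$ contribute. Writing $A_p=\binom{n+p-1}{n-1}$ and $B_p=\binom{n+p-2}{n-1}$, the dimension formula becomes $\dim\mathcal{H}_{p,q}=A_pA_q-B_pB_q$, and the hockey-stick identity (equivalently Lemma \ref{binomial sum bound}) evaluates the inner sums as $\sum_{p=0}^{M_q}A_p=\binom{n+M_q}{n}$ and $\sum_{p=0}^{M_q}B_p=\binom{n+M_q-1}{n}$. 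Hence $S_q:=\sum_{p=0}^{M_q}\dim\mathcal{H}_{p,q}=A_q\binom{n+M_q}{n}-B_q\binom{n+M_q-1}{n}$, and applying Pascal's rule to both factors ($A_q-B_q=\binom{n+q-2}{n-2}$ and $\binom{n+M_q}{n}-\binom{n+M_q-1}{n}=\binom{n+M_q-1}{n-1}$) would recast this as
\[
S_q=\binom{n+q-2}{n-2}\binom{n+M_q-1}{n}+\binom{n+q-1}{n-1}\binom{n+M_q-1}{n-1}=:T_q^{(1)}+T_q^{(2)}.
\]
I prefer this split because $\sum_qT_q^{(1)}$ and $\sum_qT_q^{(2)}$ are each of the honest order $\lambda^n$, whereas summing $A_q\binom{n+M_q}{n}$ and $B_q\binom{n+M_q-1}{n}$ separately produces a spurious $\lambda^n\log\lambda$ that only cancels after subtraction.

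The term $\Sigma_1:=\sum_qT_q^{(1)}$ is the easy one. Since $\binom{n+q-2}{n-2}=O(q^{n-2})$ and $\binom{n+M_q-1}{n}=\frac{1}{n!}\left(\frac{\lambda}{2q}\right)^n+O((\lambda/q)^{n-1})$, each summand is $O(\lambda^n q^{-2})$ and the resulting series converges. Splitting the range of $q$ into a small part, where $M_q\approx\frac{\lambda}{2q}-(n-1)$ is a sharp approximation, and a large part, where the terms are individually tiny and sum to $o(\lambda^n)$, I would obtain $\Sigma_1=\frac{\lambda^n}{2^nn!}\sum_{k\geq 1}\binom{k+n-2}{n-2}k^{-n}+o(\lambda^n)$, which is the first sum in the statement (with $k=q$).

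The hard part will be $\Sigma_2:=\sum_qT_q^{(2)}$, the cross term in which both binomial factors have degree $n-1$. Each summand is only $O(\lambda^{n-1})$, but there are $\sim\lambda$ of them, so replacing each factor by its leading monomial is not uniformly valid and the boundary of the hyperbolic region $\{2q(p+n-1)\leq\lambda\}$ contributes at leading order. My plan is to reverse the order of summation: using $\binom{n+M_q-1}{n-1}=\sum_{p=0}^{M_q}\binom{n+p-2}{n-2}$ and summing over $q$ first gives
\[
\Sigma_2=\sum_{p\geq 0}\binom{n+p-2}{n-2}\left(\binom{n+Q_p}{n}-1\right),\qquad Q_p=\floor{\tfrac{\lambda}{2(p+n-1)}},
\]
whose leading term is $\frac{\lambda^n}{2^nn!}\sum_{k}\binom{k-1}{n-2}k^{-n}$ under the substitution $k=p+n-1$. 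The delicate point, and the crux of the whole computation, is the endpoint $p=0$ (that is $k=n-1$): there $Q_0\approx\frac{\lambda}{2(n-1)}$ is as large as it can be, so this lone term carries the full weight $\frac{\lambda^n}{2^nn!(n-1)^n}$. Isolating this boundary term correctly—while checking that the floor functions, the shift by $n-1$, the subleading binomial coefficients, and the series tails are all $o(\lambda^n)$—is precisely what accounts for the correction $-\frac{1}{(n-1)^n}$ in the statement. Assembling $\Sigma_1$ and $\Sigma_2$ then yields the claimed limit.
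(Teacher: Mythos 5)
Your reduction is, modulo an index shift, essentially the paper's own: you rewrite $\dim\mathcal{H}_{p,q}$, a difference of two products of bidegree $(n-1,n-1)$, as a sum of a product of bidegree $(n-2,n-1)$ and one of bidegree $(n-1,n-2)$, and then sum each piece over the hyperbolic region with the lower-degree variable outermost so that the outer series converges. Your $T_q^{(1)}$ and $T_q^{(2)}$ are the paper's $f_2$ and $f_1$ in disguise, and your reversal of the order of summation in $\Sigma_2$ is exactly the paper's ``repeat the analysis with the roles of $p,q$ reversed.'' Up to the last step your estimates are sound: $\Sigma_1=\frac{\lambda^n}{2^n n!}\sum_{k\ge 1}\binom{k+n-2}{n-2}k^{-n}+o(\lambda^n)$ and $\Sigma_2=\frac{\lambda^n}{2^n n!}\sum_{k\ge n-1}\binom{k-1}{n-2}k^{-n}+o(\lambda^n)$.

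The gap is your final sentence. The $p=0$ term of $\Sigma_2$ contributes $\binom{n-2}{n-2}\bigl(\binom{n+Q_0}{n}-1\bigr)=\frac{\lambda^n}{2^n n!\,(n-1)^n}+O(\lambda^{n-1})$ with a \emph{plus} sign, and this is precisely the $k=n-1$ term of $\sum_{k\ge 1}\binom{k-1}{n-2}k^{-n}$ (that binomial vanishes for $k\le n-2$, so the series genuinely begins at $k=n-1$). Keeping this term, as your setup requires, therefore just reproduces the full second series and leaves nothing over to subtract: carried out correctly, your computation yields $\lim N(\lambda)/\lambda^n=\frac{1}{2^n n!}\sum_{k\ge1}k^{-n}\bigl[\binom{k+n-2}{n-2}+\binom{k-1}{n-2}\bigr]$ with no $-\frac{1}{(n-1)^n}$. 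To land on the stated constant you would have to omit the eigenspaces $\mathcal{H}_{0,q}$ (eigenvalue $2q(n-1)$) entirely, which is in effect what the paper's proof does when it replaces the range $p\ge n-1$ by $p\ge n$ in $\Delta M(m)$ and later subtracts the $p=n-1$ column; your starting formula for $N(\lambda)$ includes them, as the definition of $N$ (all positive eigenvalues, with multiplicity) demands. The discrepancy is not cosmetic: for $n=2$ one checks directly that $N(4)=\dim\mathcal{H}_{0,1}+\dim\mathcal{H}_{1,1}+\dim\mathcal{H}_{0,2}=8$, whereas the paper's $M(2)$ evaluates to $3$. So the proposal does not prove the statement as written --- the claimed mechanism for producing $-\frac{1}{(n-1)^n}$ does not exist in your bookkeeping --- and you need to resolve whether the $\mathcal{H}_{0,q}$ are to be counted before the constant can be pinned down.
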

	\begin{rmk}{}
		In addition to the constant in front of the highest order term $\lambda^n$, the proof also indicates the size of the remainder term; indeed, we show that $N(\lambda)\sim c\lambda^n+O(\lambda^{n-1}\ln \lambda)$. We note that in the asymptotic expansion of the counting function for the Dirichlet Laplacian, the coefficient of the second order term gives the surface area. It would be an interesting further investigation to understand the constant in front of the $\lambda^{n-1}\ln \lambda$ term.
	\end{rmk}
	
	\begin{proof} We note that all the eigenvalues are even integers so
		we define a function $M(x):(0,\infty)\to \mathbb{R}$ by $M(x) = N(2x)$ when $x$ is a positive integer, and $M(x)=M(\floor{x})$ otherwise. We define
		\begin{align*}
		\Delta M(m) = M(m) - M(m-1) = \sum_{2q(p+n-1) = 2m} \dim \H_{p,q}
		= \sum_{q(p+n-1) = m} \dim \H_{p,q}
		= \sum_{pq = m} \dim \H_{p-n+1,q}. 
		\end{align*}
		We note that this quantity is exactly the dimension of the eigenspace of the eigenvalue $2m$.
		Let
		\begin{align*}
		f(p,q) = \frac{(n-1)(p+q)}{(p-n+1)q} \binom{p-1}{p-n}\binom{n+q-2}{q-1},
		\end{align*}
		so that by \ref{hpq dim}, $f(p,q) = \dim \H_{p-n+1,q}$ when $p-n+1,q \geq 1$. Then
		\begin{align*}
		\Delta M(m) = \sum_{pq = m, p \geq n} f(p,q). 
		\end{align*}
		Now
		\begin{align}
		M(x) 
		= \sum_{m\leq x} \Delta M(m) 
		= \sum_{m\leq x}\sum_{pq = m, p \geq n} f(p,q)
		= \sum_{p=n}^x \sum_{q=1}^{\floor{x/p}} f(p,q). \label{eqn: M expr}
		\end{align}
		Now we analyze $f(p,q)$
		\begin{align*}
		f(p,q) = \frac{(n-1)(p+q)}{(p-n+1)q} \binom{p-1}{p-n}\binom{n+q-2}{q-1}
		= \frac{(n-1)}{(p-n+1)} \binom{p-1}{n-1} (\frac pq + 1 )\binom{q+n-2}{n-1}.
		\end{align*}
		We note that
		\begin{align*}
		\frac{(n-1)}{(p-n+1)} \binom{p-1}{n-1} = \binom{p-1}{n-2},
		\end{align*}
		so
		\begin{align*}
		f(p,q) = \binom{p-1}{n-2} (\frac pq + 1 )\binom{q+n-2}{n-1}. 
		\end{align*}
		We split $f(p,q)$. Define
		\begin{align*}
		f_1(p,q) &= \binom{p-1}{n-2} \binom{q+n-2}{n-1} \\
		f_2(p,q) &= \binom{p-1}{n-2} \frac pq\binom{q+n-2}{n-1} 
		\end{align*}
		so that $f = f_1 + f_2$. We analyze both separately. We see
		\begin{align*}
		\sum_{m\leq x} \sum_{pq = m} f_1(p,q)
		&= \sum_{p=1}^x \binom{p-1}{n-2} \sum_{q=1}^{\floor{x/p}} \binom{q+n-2}{n-1}  
		= \sum_{p=1}^x \binom{p-1}{n-2} \left (\frac{(x/p)^{n}}{n!} + O(({x/p})^{n-1})\right ) \\ 
		&= \sum_{p=1}^x \binom{p-1}{n-2}\frac{(x/p)^{n}}{n!} + O( \binom{p-1}{n-2} ({x/p})^{n-1})  
		= \sum_{p=1}^x \binom{p-1}{n-2} \frac{x^{n}}{p^nn!} + O(p^{n-2}(x/p)^{n-1}) \\
		&= \sum_{p=1}^x \binom{p-1}{n-2} \frac{x^{n}}{p^nn!} + O(\frac{x^{n-1}}{p}) 
		= \left (\frac{1}{n!}\sum_{p=1}^x p^{-n} \binom{p-1}{n-2} \right ) x^n+ O({x^{n-1}}\ln x) \numberthis \label{eqn: f1 estimate}
		\end{align*}
		where we have used Lemma \ref{binomial sum bound} for the second equality. 
		Note that $\sum_{p=1}^\infty p^{-n} \binom{p-1}{n-2}$ is a convergent sum. 
		\vskip 0.5 cm
		
		Now we look at
		\begin{align*}
		f_2(p,q)
		= p \binom{p-1}{n-2} \frac 1 q \binom{q+n-2}{q-1} 
		= \frac{p(p-1)!}{(n-2)!(p-n+1)!}\frac{(q+n-2)!}{q(q-1)!(n-1)!}
		= \binom{p}{n-1} \binom{q+n-2}{n-2} .
		\end{align*}
		The idea is to repeat the analysis done for $f_1$ but with the roles of $p,q$ reversed:
		\begin{align*}
		\sum_{m\leq x} \sum_{pq = m} f_2(p,q) 
		= \sum_{q=1}^x \sum_{p=1}^{\floor{x/q}} f_2(p,q)
		= \sum_{q=1}^x \binom{q+n-2}{n-2} \sum_{p=1}^{\floor{x/q}}  \binom{p}{n-1}.  
		\end{align*}
		Hence
		\begin{align*}
		\sum_{m\leq x} \sum_{pq = m} f_2(p,q)
		&= \sum_{q=1}^x \binom{q+n-2}{n-2} \sum_{p=1}^{\floor{x/q}}  \binom{p}{n-1}  
		= \sum_{q=1}^x \binom{q+n-2}{n-2} \left (\frac{({x/q})^{n}}{n!} + O((x/q)^{n-1})\right ) \\
		&= \sum_{q=1}^x \binom{q+n-2}{n-2} \frac{({x/q})^{n}}{n!} + O(\binom{q+n-2}{n-2}(x/q)^{n-1}) \\
		&= \sum_{q=1}^x \binom{q+n-2}{n-2} \frac{x^{n}}{q^nn!} + O(q^{n-2}(x/q)^{n-1}) 
		= \sum_{q=1}^x \binom{q+n-2}{n-2} \frac{x^{n}}{q^nn!} + O(\frac{x^{n-1}}{q}) \\
		&= \left (\frac{1}{n!}\sum_{q=1}^x q^{-n} \binom{q+n-2}{n-2} \right ) x^n+ O({x^{n-1}}\ln x) \numberthis \label{eqn: f2 estimate}
		\end{align*}
		where once again we have used Lemma \ref{binomial sum bound} for the second equality. 
		
		Recall that $f = f_1 + f_2$. By combining \eqref{eqn: f1 estimate} and \eqref{eqn: f2 estimate} we get
		\begin{align*}
		\sum_{m\leq x} \sum_{pq = m} f(p,q) 
		&= \frac{1}{n!}\left (\sum_{q=1}^x q^{-n} \binom{q+n-2}{n-2} + \sum_{p=n}^x  p^{-n}\binom{p-1}{n-2}\right ) x^n+ O({x^{n-1}}\ln x) \\
		&= \frac{1}{n!}\left (\sum_{q=1}^x q^{-n} \binom{q+n-2}{n-2} + \sum_{p=1}^x  p^{-n}\binom{p-1}{n-2}\right ) x^n+ O({x^{n-1}}\ln x) \\
		&= \frac{1}{n!}\left ( \sum_{k=1}^x  k^{-n} \left ( \binom{k+n-2}{n-2} + \binom{k-1}{n-2} \right ) \right ) x^n+ O({x^{n-1}}\ln x). 
		\end{align*}
		Let $h(k) = \binom{k+n-2}{n-2} + \binom{k-1}{n-2}$. We note that $h$ is a polynomial of degree $n-2$. Hence the sum
		\begin{align*}
		\sum_{k=1}^\infty  k^{-n} \left (\binom{k+n-2}{n-2} + \binom{k-1}{n-2} \right )
		\end{align*}
		converges at the same rate as $\sum_{k\geq 1} k^{-2}$. Hence
		\begin{align*}
		\sum_{k=1}^x  k^{-n} \left ( \binom{k+n-2}{n-2} + \binom{k-1}{n-2} \right )
		=\sum_{k=1}^\infty  k^{-n} \left (\binom{k+n-2}{n-2} + \binom{k-1}{n-2} \right ) + O(x^{-1}),
		\end{align*}
		and so we get
		\begin{align}
		\sum_{m\leq x} \sum_{pq = m} f(p,q) 
		= \frac{1}{n!}\left ( \sum_{k=1}^\infty  k^{-n} \left ( \binom{k+n-2}{n-2} + \binom{k-1}{n-2} \right ) \right ) x^n+ O({x^{n-1}}\ln x). \label{eqn: f sum}
		\end{align}
		Recall that in our expression for $M$, \eqref{eqn: M expr}, we had $p \geq n$ in the sum. Hence in order to get an expression for $\Delta M$ we must subtract off
		\begin{align*}
		\sum_{m\leq x} \sum_{pq = m, p < n} f(p,q) = \sum_{p=1}^{n-1} \sum_{q=1}^{\floor{x/p}} f(p,q)
		\end{align*}
		from \eqref{eqn: f sum}. Recall that
		\begin{align*}
		f(p,q) = \binom{p-1}{n-2} (\frac pq + 1 )\binom{q+n-2}{n-1} 
		\end{align*}
		so $f(p,q) = 0$ if $p < n-1$. Hence
		\begin{align*}
		\sum_{p=1}^{n-1} \sum_{q=1}^{\floor{x/p}} f(p,q) 
		&= \sum_{q=1}^{\floor{x/(n-1)}} f(n-1,q) 
		= \sum_{q=1}^{\floor{x/(n-1)}} f_1(n-1,q) + f_2(n-1,q) \\
		&= \sum_{q=1}^{\floor{x/(n-1)}} \binom{n-1-1}{n-2} \binom{q+n-2}{n-1} + \binom{n-1}{n-1} \binom{q+n-2}{n-2} \\
		&= \sum_{q=1}^{\floor{x/(n-1)}} \binom{q+n-2}{n-1} + \sum_{q=1}^{\floor{x/(n-1)}} \binom{q+n-2}{n-2} \\
		&= \frac{(x/(n-1))^n}{n!} + O((x/(n-1))^{n-1}) + \frac{(x/(n-1))^{n-1}}{(n-1)!} + O((x/(n-1))^{n-2}) \\
		&= \frac{x^n}{(n-1)^n n!} + O(x^{n-1}).
		\end{align*}
		Now subtracting our expression for $\sum_{m\leq x} \sum_{pq = m, p < n} f(p,q)$ from \eqref{eqn: f sum} we obtain
		\begin{align*}
		M(x) = \sum_{m\leq x} \sum_{pq = m, p \geq n} f(p,q) 
		= \frac{1}{n!}\left ( \sum_{k=1}^\infty  k^{-n} \left ( \binom{k+n-2}{n-2} + \binom{k-1}{n-2} \right ) \right ) x^n - \frac{x^n}{(n-1)^n n!} + O({x^{n-1}}\ln x),
		\end{align*}
		where we recall our expression for $M$, \eqref{eqn: M expr}. Finally we have
		\begin{equation*}
		\lim_{\lambda \to \infty}\frac{N(\lambda)}{\lambda^{n}}
		= \frac{1}{2^n}\lim_{\lambda \to \infty}\frac{M(\lambda)}{\lambda^{n}}
		=\frac{1}{2^nn!}\left(\sum_{k=1}^\infty k^{-n}\left[\binom{k+n-2}{n-2}+\binom{k-1}{n-2}\right]-\frac{1}{(n-1)^n}\right)
		\end{equation*}
		as desired. 
	\end{proof}
	
	\section{More Explicit Computations}
	
	The problem now becomes to compute the coefficient
	\begin{align*}
	\frac{1}{2^nn!} \left ( \sum_{k=1}^\infty k^{-n} \left (  \binom{k+n-2}{n-2} + \binom{k-1}{n-2} \right ) - \frac{1}{(n-1)^n} \right ) .
	\end{align*} 
	We set $h(k)= \binom{k+n-2}{n-2} + \binom{k-1}{n-2} $ and rewrite the expression above as
	\begin{align*}
	\frac{1}{2^nn!} \left ( \sum_{k=1}^\infty k^{-n} h(k) - \frac{1}{(n-1)^n} \right ) . 
	\end{align*} 
	However we note that $\binom{k+n-2}{n-2} = (-1)^{n} \binom{-k-1}{n-2}$, so $h(k)$ is an even polynomial when $n$ is even and odd when $n$ is odd. Hence $k^{-n}h(k)$ is a polynomial in $k^{-2}$. Since the values of the Riemann zeta function at positive even integers is well-known, then for any fixed $n$ we can easily evaluate the above sum. To see explicitly how the expression above works we will examine the case when $n = 5$. 
	\begin{ex}
		\begin{align*}
		\lim_{\lambda \to \infty}\frac{N(\lambda)}{\lambda^5}&=\frac{1}{2^55!}\left( \sum_{k=1}^\infty k^{-5}\left[\binom{k+3}{3}+\binom{k-1}{3}\right] - \frac{1}{(5-1)^5}\right)\\
		&= \frac{1}{2^55!}\left( \sum_{k=1}^\infty k^{-5}\left[\frac{(k+3)(k+2)(k+1)}{3!}+\frac{(k-1)(k-2)(k-3)}{3!}\right] - \frac{1}{4^5}\right)\\
		&= \frac{1}{2^55!}\left( \sum_{k=1}^\infty \frac{k^{-5}}{6}\left[{k^3 + 6k^2 + 11k + 6}+{k^3 - 6k^2 + 11k - 6}\right] - \frac{1}{4^5}\right)
		= \frac{1}{2^55!}\left( \sum_{k=1}^\infty \frac{2k^{-5}}{6}({k^3 +  11k} ) - \frac{1}{4^5}\right)\\
		&= \frac{1}{2^55!}\left( \sum_{k=1}^\infty \frac{1}{3}({k^{-2} +  11k^{-4}} ) - \frac{1}{4^5}\right)
		= \frac{1}{2^55!}\left( \frac{1}{3}\zeta(2) + \frac{11}{3}\zeta(4)  - \frac{1}{4^5}\right)
		=\frac{1}{2^55!}\left(\frac{\pi^2}{18}+\frac{11\pi^4}{270} - \frac{1}{4^5}\right).
		\end{align*}
	\end{ex}
	
	In general since the values of the Riemann zeta function at positive even integers are rational multiples of a power of $\pi^2$, we conclude with the following proposition.
	
	\begin{prop}
		The leading coefficient of the eigenvalue counting function,
		\begin{align*}
		\lim_{\lambda \to \infty}\frac{N(\lambda)}{\lambda^{n}}=\frac{1}{2^nn!}\left(\sum_{k=1}^\infty k^{-n}\left[\binom{k+n-2}{n-2}+\binom{k-1}{n-2}\right]-\frac{1}{(n-1)^n}\right),
		\end{align*}
		is a rational polynomial in $\pi^2$. 
	\end{prop}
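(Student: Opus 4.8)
The plan is to collapse the infinite sum to a finite $\Q$-linear combination of even zeta values and then invoke Euler's evaluation of $\zeta$ at even integers. First I would set $h(k) = \binom{k+n-2}{n-2} + \binom{k-1}{n-2}$ and record, as in the discussion preceding the proposition, that $h$ is a polynomial in $k$ of degree $n-2$. Using the identity $\binom{k+n-2}{n-2} = (-1)^n\binom{-k-1}{n-2}$, I would confirm the parity statement: $h$ is an even function of $k$ when $n$ is even and an odd function of $k$ when $n$ is odd. This step is really the crux of the whole argument, since it is exactly what forces $k^{-n}h(k)$ to involve only even powers of $1/k$.

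Next I would make this explicit. Writing $h(k) = \sum_{j} a_j k^{j}$ with $a_j \in \Q$ and $a_j = 0$ unless $j \equiv n \pmod 2$, we get $k^{-n}h(k) = \sum_j a_j k^{j-n}$, where every exponent $j-n$ is even and lies in the range $-n \leq j-n \leq -2$. Hence there exist rationals $c_1,\dots,c_m$, with $m = \floor{n/2}$, such that $k^{-n}h(k) = \sum_{i=1}^{m} c_i k^{-2i}$. Because the slowest-decaying term is $c_1 k^{-2}$, the resulting series converges absolutely, so I may sum term by term to obtain
\[
\sum_{k=1}^\infty k^{-n}h(k) = \sum_{i=1}^{m} c_i\,\zeta(2i).
\]

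Then I would apply Euler's formula $\zeta(2i) = \tfrac{(-1)^{i+1}B_{2i}(2\pi)^{2i}}{2(2i)!}$, which exhibits each $\zeta(2i)$ as a rational multiple $q_i\,\pi^{2i}$ of a power of $\pi^2$ with $q_i \in \Q$. Substituting gives $\sum_{k=1}^\infty k^{-n}h(k) = \sum_{i=1}^{m} c_i q_i\,\pi^{2i} \in \Q[\pi^2]$. Finally, since $\tfrac{1}{(n-1)^n}$ and the prefactor $\tfrac{1}{2^n n!}$ are both rational, the full expression
\[
\frac{1}{2^n n!}\left(\sum_{i=1}^{m} c_i q_i\,\pi^{2i} - \frac{1}{(n-1)^n}\right)
\]
is a polynomial in $\pi^2$ with rational coefficients, which is the claim.

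I do not anticipate a genuine obstacle here; the entire content sits in the parity observation, which guarantees that only \emph{even} zeta values appear. Were an odd power of $k$ to survive in $h(k)$, the sum would pick up a term like $\zeta(2i+1)$, an odd zeta value not known to be a rational multiple of any power of $\pi$, and the conclusion would fail. Thus the only things to be careful about are verifying the parity and degree bookkeeping and checking the absolute convergence that licenses the term-by-term summation.
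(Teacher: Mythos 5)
Your proof is correct, and it rests on the same mechanism as the paper's: the parity of $h(k)=\binom{k+n-2}{n-2}+\binom{k-1}{n-2}$ forces $k^{-n}h(k)$ to be a polynomial in $k^{-2}$, so only even zeta values appear, and Euler's formula finishes. The difference is one of explicitness. You argue abstractly from the identity $\binom{k+n-2}{n-2}=(-1)^n\binom{-k-1}{n-2}$ (which is exactly the observation the paper makes in the discussion preceding the Proposition) and conclude that the sum is some unspecified $\Q$-linear combination $\sum_{i=1}^{\floor{n/2}}c_i\zeta(2i)$. The paper's written proof instead expands both binomial coefficients via signed and unsigned Stirling numbers of the first kind, uses $s'(n-1,j)=(-1)^{n-1+j}s(n-1,j)$ to implement the same cancellation of odd-index terms, and thereby lands on an explicit closed form $\sum_{j}\frac{(2\pi)^{n-j+1}\abs{B_{n-j+1}}}{(n-2)!\,(n-j+1)!}\,s(n-1,j)$ for the infinite sum. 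Your route is shorter and fully sufficient for the qualitative claim that the limit lies in $\Q[\pi^2]$; the paper's route buys an explicit formula for the coefficients in terms of Stirling and Bernoulli numbers, which is of independent interest given the authors' stated goal of understanding these constants. Your bookkeeping (degree $n-2$, exponents $j-n$ even in $[-n,-2]$, absolute convergence justifying term-by-term summation) is all in order.
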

	
	\begin{proof}	
		As in \cite{Loehr2011}, when we let $s$ represent the signed Stirling number of the first kind, $s'$ represent the unsigned Stirling number of the first kind, and $B_l$ represent the $l$-th Bernoulli number we get the following.
		\begin{align*}
		\sum_{k=1}^\infty k^{-n}\left[\binom{k+n-2}{n-2}+\binom{k-1}{n-2}\right] = \sum_{j=0}^{n-1} \frac{(2\pi)^{n-j+1}\abs{B_{n-j+1}}}{ (n-2)!(n-j+1)!}s(n-1, j) 
		\end{align*}	
		By \cite[Theorem 2.74]{Loehr2011} we have
		\begin{align*}
		\binom{k-1}{n-2} 
		&= \frac{(k-1)(k-2)\dots(k-n+2)}{(n-2)!} \\
		&= \frac{k(k-1)(k-2)\dots(k-n+2)}{k(n-2)!} \\
		&= \frac{1}{k(n-2)!} \sum_{j=0}^{n-1} s(n-1, j) k^j
		\end{align*}
		and by \cite[Theorem 2.73]{Loehr2011}
		\begin{align*}
		\binom{k+n-2}{n-2} 
		&= \frac{(k+n-2)(k+n-3)\dots(k+1)}{(n-2)!} \\
		&= \frac{(k+n-2)(k+n-3)\dots(k+1)k}{k(n-2)!}  \\
		&= \frac{1}{k(n-2)!} \sum_{j=0}^{n-1} s'(n-1, j) k^j.
		\end{align*}
		We use the notation
		\begin{align*}
		\chi_{2 \mid l}
		= 
		\begin{cases}
		1, & \text{ if } 2 \mid l \\
		0, & \text{ else }
		\end{cases}
		\end{align*}
		Now we get
		\begin{align*}
		\sum_{k=1}^\infty k^{-n}\left[\binom{k+n-2}{n-2}+\binom{k-1}{n-2}\right]
		&= \sum_{k=1}^\infty \frac{1}{k^{n+1}(n-2)!} \sum_{j=0}^{n-1} (s(n-1, j) + s'(n-1, j)) k^j \\
		&= \sum_{k=1}^\infty \frac{1}{k^{n+1}(n-2)!} \sum_{j=0}^{n-1} (1 + (-1)^{n-1+j}) s(n-1, j) k^{j} \\
		&= \sum_{k=1}^\infty \frac{2}{k^{n+1}(n-2)!} \sum_{j=0}^{n-1} \chi_{2 \mid (n-j+1)} s(n-1, j) k^{j} \\
		&= \sum_{j=0}^{n-1} \frac{2\chi_{2 \mid (n-j+1)}}{(n-2)!}s(n-1, j) \sum_{k=1}^\infty {k^{j-n-1}} \\
		&= \sum_{j=0}^{n-1} \frac{2}{(n-2)!}s(n-1, j) \chi_{2 \mid (n-j+1)} \zeta(n-j+1) \\
		&= \sum_{j=0}^{n-1} \frac{2}{(n-2)!}s(n-1, j) \cdot \frac{(-1)^{\frac{n-j+1}{2}}(2\pi)^{n-j+1}B_{n-j+1}}{2 (n-j+1)!} \\
		&= \sum_{j=0}^{n-1} \frac{(2\pi)^{n-j+1}\abs{B_{n-j+1}}}{ (n-2)!(n-j+1)!}s(n-1, j)
		\end{align*}
		where we have used that $B_l = 0$ if $l$ is odd. 
	\end{proof}
	\vskip 1 cm	
	
	As mentioned in the introduction, the right hand side is not as elegant as Weyl's law. Therefore, we pose the question whether we can hear the surface area with the Kohn Laplacian. We leave the further analysis of these numbers to a future work.
	
	\section*{Acknowledgements} 
	We would like to thank the anonymous referee for constructive feedback.
	We would like to thank Siqi Fu and Tommie Reerink for careful comments on an earlier version of this paper. This research was partially conducted at the NSF REU Site (DMS-1659203) in Mathematical Analysis and Applications at the University of Michigan-Dearborn. We would like to thank the National Science Foundation, National Security Agency, and University of Michigan-Dearborn for their support.

	\bibliographystyle{alpha}
	\bibliography{paper}
\end{document}